\theoremstyle{plain}
\newtheorem{theorem}{Theorem}[section]
\newtheorem{proposition}[theorem]{Proposition}
\newtheorem{corollary}[theorem]{Corollary}
\newtheorem{question}[theorem]{Question}
\theoremstyle{definition}
\newtheorem{definition}[theorem]{Definition}
\theoremstyle{remark}
\newtheorem*{remark}{Remark}
\newcommand{\mC}{\mathbb{C}}
\newcommand{\mR}{\mathbb{R}}
\newcommand{\mD}{\mathbb{D}}
\newcommand{\hC}{\widehat{\mathbb{C}}}
\newcommand{\RiemannSphere}{\widehat{\mathbb{C}}}
\begin{document}

\title{Continuous analytic capacity and holomorphic motions}

\date{November 6, 2024}

\author[M. Younsi]{Malik Younsi}
\address{Department of Mathematics, University of Hawaii Manoa, Honolulu, HI 96822, USA.}
\email{malik.younsi@gmail.com}

\keywords{Analytic capacity; continuous analytic capacity; holomorphic motion; Julia set; harmonic measure; Dirichlet algebra}
\subjclass[2010]{primary 30C85, 37F44}

%\thanks{The author was supported by NSF Grant DMS-2350530.}

\begin{abstract}
We construct a compact set whose continuous analytic capacity does not vary continuously under a certain holomorphic motion, thereby answering a question of Paul Gauthier. Our example is inspired by holomorphic dynamics and relies on the works of Bishop--Carleson--Garnett--Jones and Browder--Wermer relating tangent points of Jordan curves, harmonic measure and Dirichlet algebras. Our approach also provides a new proof of a result of Ransford, Younsi and Ai on the variation of analytic capacity under holomorphic motions. In addition, we show that extremal functions for continuous analytic capacity may not exist.
\end{abstract}

\maketitle

\section{Introduction and statement of main results}
\label{sec1}

Let $E$ be a compact subset of the complex plane $\mC$ and let $D:=\hC \setminus E$ denote the complement of $E$ in the Riemann sphere $\hC$, so that $\infty \in D$. The \textit{analytic capacity} of $E$ is defined by
$$\gamma(E):= \sup \{|f'(\infty)|: f \in H^\infty(D), |f| \leq 1 \,\, \mbox{on} \,\, D\}.$$
Here $H^\infty(D)$ denotes the space of all bounded analytic functions on $D$ and $f'(\infty)$ is defined by
$$f'(\infty):= \lim_{z \to \infty} z(f(z)-f(\infty)).$$
Analytic capacity was introduced by Ahlfors in \cite{AHL} for the study of a problem of Painlev\'e from 1888 asking for a geometric characterization of the compact sets $E \subset \mathbb{C}$ that are \textit{removable} for bounded analytic functions, in the sense that all functions in $H^\infty(D)$ are constant. It is easily seen that such removable sets coincide precisely with the sets of zero analytic capacity. Despite recent advances, analytic capacity is notoriously hard to estimate in general and some of its properties remain quite mysterious. For instance, it is not known whether analytic capacity is subadditive, i.e.,
$$\gamma(E \cup F) \leq \gamma(E) + \gamma(F)$$
for all compact sets $E,F \subset \mC$. For more information on analytic capacity, we refer the reader to \cite{DUD}, \cite{GAR}, \cite{TOL}, \cite{YOR} and \cite{YOU}.

The notion of removability is closely related to Hausdorff measure. Indeed, a result generally attributed to Painlev\'e states that $E$ is removable whenever its one-dimensional Hausdorff measure $\mathcal{H}^1(E)$ is zero, see e.g. \cite[Theorem 2.7]{YOU2}. In particular, if the Hausdorff dimension of $E$ satisfies $\operatorname{dim}_H(E)<1$, then $\gamma(E)=0$. On the other hand, a simple argument using Cauchy transforms and Frostman's Lemma can be used to deduce that $\gamma(E)>0$ whenever $\operatorname{dim}_H(E)>1$, see e.g. \cite[Theorem 2.10]{YOU2}. Painlev\'e's problem is therefore reduced to the case of dimension precisely equal to one. Over the years, however, it became apparent that this remaining case is much more difficult, and it took more than a hundred years until a reasonable solution to Painlev\'e's problem was obtained, thanks to the work of David \cite{DAV}, Tolsa \cite{TOL2} and many others.

Another mysterious property of analytic capacity is its behavior under various forms of planar transformations. For instance, until the recent groundbreaking work of Tolsa \cite{TOL2} it was not known whether sets of analytic capacity zero are preserved by simple affine maps such as $\phi(x,y):=(x,2y)$, $x,y \in \mR$. In the subsequent seminal article \cite{TOL4}, Tolsa studied the behavior of analytic capacity under bilipschitz homeomorphisms of the plane and showed that sets of analytic capacity zero are bilipschitz-invariant.

In contrast to Tolsa's result, Ransford, Younsi and Ai recently proved in \cite{RYA} that analytic capacity need not vary continuously under certain ``nice'' planar transformations known as holomorphic motions. A \textit{holomorphic motion} of the Riemann Sphere $\hC$ is a map $h:\mD \times \hC \to \hC$, where $\mD:=\{z \in \mC: |z|<1\}$ is the open unit disk, such that

\begin{enumerate}[\rm(i)]
\item for each fixed $z\in \hC$, the map $\lambda\mapsto h(\lambda,z)$
is holomorphic on $\mD$,
\item for each fixed $\lambda \in \mD$, the map $z\mapsto h(\lambda,z)$ is
injective on $\hC$,
\item $h(0,z)=z$ for all $z\in \hC$.
\end{enumerate}

For a holomorphic motion $h:\mD \times \hC \to \hC$ and a subset $E \subset \hC$, we write
$$h_\lambda(z):=h(\lambda,z) \qquad (\lambda \in \mD, z\in\hC)$$
and
$$E_\lambda:=h_\lambda(E),$$
so that $E_0=E$. In this article, we shall only consider holomorphic motions that fix the point $\infty$, i.e. $h_\lambda(\infty)=\infty$ for all $\lambda \in \mD$.

Holomorphic motions were introduced by Ma\~{n}\'{e}, Sad and Sullivan in the 1980's, motivated by applications to holomorphic dynamics. In \cite{MSS}, they proved that every holomorphic motion $h:\mD \times \hC \to \hC$ is jointly continuous in $(\lambda,z)$, which is part of a more general result known as the $\lambda$-lemma. For more information on holomorphic motions, we refer the reader to \cite[Chapter 12]{AIM} and \cite{AM}.

Now, consider a real-valued function $\mathcal{F}$ defined on the collection of all compact subsets of $\mC$. Let $E \subset \mC$ be compact, and suppose that $h:\mD \times \hC \to \hC$ is a holomorphic motion of $\hC$. Then, for each $\lambda \in \mD$, the set $E_\lambda \subset \mC$ is compact by the $\lambda$-lemma, so the function
\begin{equation}
\label{eqfunction}
\lambda \mapsto \mathcal{F}(E_\lambda) \qquad (\lambda \in \mD)
\end{equation}
is well-defined. The study of the regularity of the function (\ref{eqfunction}) for different choices of $\mathcal{F}$ has been a unifying theme in the theory of holomorphic motions as well as the subject of an abundance of classical and more recent work. For instance, the variation of Hausdorff dimension under holomorphic motions was studied in the seminal work of Ruelle \cite{RUE}, who proved that the function (\ref{eqfunction}) is real-analytic when $\mathcal{F}=\operatorname{dim}_H$ and the sets $E_\lambda$ are Julia sets of hyperbolic rational maps depending holomorphically on $\lambda$. This result is considered one of the landmarks of thermodynamic formalism.

In general, Hausdorff dimension may not vary real analytically under holomorphic motions. On the other hand, it always changes continuously, and the same is true for area measure. The study of the behavior of Hausdorff dimension and area under holomorphic motions played a fundamental role in the influential work of Astala \cite{AST} on quasiconformal mappings. For the latest developments on the subject, see the recent paper \cite{FRY} by Fuhrer, Ransford and Younsi.

In recent years, there has also been significant interest in the study of the behavior of various capacities under holomorphic motions. In particular, it was observed that the function
$$\lambda \mapsto \gamma(E_\lambda) \qquad (\lambda \in \mD)$$
may not behave as nicely as one would expect. In \cite{PRY}, Pouliasis, Ransford and the Younsi showed that there exist a compact set $E$ with $\gamma(E)>0$ and a holomorphic motion $h:\mD \times \hC \to \hC$ for which the functions $\lambda \mapsto \gamma(E_\lambda)$ and $\lambda \mapsto \log{\gamma(E_\lambda)}$ are neither subharmonic nor superharmonic on $\mD$. See also the work of Zakeri in \cite{ZAK} for related results and applications to holomorphic dynamics.

In fact, it turns out that the function $\lambda \mapsto \gamma(E_\lambda)$ need not be continuous either, as previously mentioned.

\begin{theorem}[Ransford--Younsi--Ai \cite{RYA}]
\label{capmotionthm}
There exist a compact set $E \subset \mC$ and a holomorphic motion $h:\mD \times \hC \to \hC$ for which the function
$$\lambda \mapsto \gamma(E_\lambda) \qquad (\lambda \in \mD)$$
is discontinuous at $0$.
\end{theorem}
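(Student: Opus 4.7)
The strategy, in the spirit of this paper, is to derive a contradiction from the assumed continuity of $\gamma$ by producing an extremal function for continuous analytic capacity that Corollary~\ref{extremal} forbids. The starting point is a purely unrectifiable quasicircle $J$ with $\mathcal{H}^1(T_J) = 0$ (for instance, a hyperbolic Julia set whose quasicircle is purely unrectifiable, or an explicit self-affine quasicircle). Take $E_0 \subset J$ to be a closed subarc, a Jordan arc with $\mathcal{H}^1(T_{E_0}) = 0$ satisfying $\alpha(E_0) = \gamma(E_0) > 0$ by Theorem~\ref{mainthm}.

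Next, I would build a holomorphic motion $h : \mathbb{D} \times \RiemannSphere \to \RiemannSphere$ via a family of quasiconformal maps $\Phi_\lambda$ solving the normalized Beltrami equation $\bar\partial\Phi_\lambda = \lambda\mu\,\partial\Phi_\lambda$ for a suitable $\mu \in L^\infty(\mathbb{C})$ with $\|\mu\|_\infty \leq 1$. By the measurable Riemann mapping theorem and the $\lambda$-lemma, $h_\lambda := \Phi_\lambda$ is a holomorphic motion of $\RiemannSphere$. The coefficient $\mu$ is engineered (by composing with a quasiconformal straightening of $J$) so that for every $\lambda$ in some punctured neighborhood of the origin, $E_\lambda := \Phi_\lambda(E_0)$ is a real-analytic arc.

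Arguing by contradiction, suppose $\gamma(E_\lambda) \to \gamma(E_0)$ as $\lambda \to 0$, and let $f_\lambda$ denote the Ahlfors function of $E_\lambda$. Normality of $\{f_\lambda\}$ on $\RiemannSphere \setminus E_0$, combined with joint continuity of the motion, yields a subsequential local-uniform limit $f_0$, which by uniqueness is the Ahlfors function of $E_0$ with $f_0'(\infty) = \gamma(E_0)$. Since $E_\lambda$ is smooth for $\lambda \neq 0$, each $f_\lambda$ extends continuously to both sides of $E_\lambda$; the key claim is that in the limit these two-sided boundary values merge into a single continuous function on $E_0$, so that $f_0 \in A(\RiemannSphere \setminus E_0)$. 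This would exhibit an extremal for $\alpha(E_0) = \gamma(E_0)$, contradicting Corollary~\ref{extremal}.

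The principal obstacle is precisely this boundary-value collapse. One would proceed by combining quantitative quasiconformal distortion estimates (to control the modulus of continuity of $f_\lambda$ on $E_\lambda$ uniformly in $\lambda$) with the mutual singularity $\omega \perp \omega^*$ on $E_0$ furnished by Theorem~\ref{BCGJ}, which forces the two-sided harmonic measures of $E_\lambda$ to coalesce as $\lambda \to 0$. Making this coalescence rigorous, so that a common continuous boundary extension of $f_0$ on $E_0$ is produced, is the main technical challenge of the argument.
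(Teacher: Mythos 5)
Your proposal does not follow the paper's route (which is a direct construction via the B\"ottcher motion of quadratic Julia sets: one takes $E\subset\partial\mathbb{D}$ of positive length chosen inside $\bigcap_n h_{\lambda_n}^{-1}(B_n)$, where $B_n\subset\mathcal{J}_{\lambda_n/4}$ has full harmonic measure and zero length, so that $\gamma(E)>0$ but $\gamma(E_{\lambda_n})=0$), and as written it contains a fatal gap rather than a repairable technical difficulty. The central problem is the ``key claim'' that the two one-sided boundary values of the Ahlfors functions $f_\lambda$ merge in the limit, producing $f_0\in A(\RiemannSphere\setminus E_0)$. This is false. By Property (P6), the Ahlfors function of a Jordan arc $E_0$ with $\gamma(E_0)>0$ is the conformal map of $\RiemannSphere\setminus E_0$ onto $\mathbb{D}$; its two one-sided boundary value maps parametrize two complementary arcs of $\partial\mathbb{D}$ and can never coalesce into a single continuous function (this is precisely the discontinuity exploited in the proof of Corollary \ref{extremal}). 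Moreover, the mutual singularity $\omega\perp\omega^*$ points in the \emph{opposite} direction from any ``coalescence'' of the two sides. Most importantly, the logic of the contradiction is broken: the assumed continuity of $\lambda\mapsto\gamma(E_\lambda)$ is used only to identify $f_0'(\infty)=\gamma(E_0)$, and the conclusion you then reach --- that $E_0$ admits an extremal function for $\gamma$ --- is true unconditionally and contradicts nothing. Corollary \ref{extremal} forbids only a \emph{continuous} extremal function, and no step of your argument (valid or not) derives continuity of $f_0$ on $\RiemannSphere$ \emph{from} the continuity of $\gamma(E_\lambda)$. Hence even if every analytic step were granted, no contradiction would be obtained.

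There is a secondary gap in the construction of the motion itself. Solving $\overline{\partial}\Phi_\lambda=\lambda\mu\,\partial\Phi_\lambda$ and ``engineering $\mu$ by composing with a straightening of $J$'' can make $E_\lambda=\Phi_\lambda(E_0)$ an analytic arc only for the isolated parameter(s) at which $\lambda\mu$ equals the Beltrami coefficient of the straightening map (there $\Phi_\lambda$ differs from the straightening by a conformal map of the sphere); it does not produce a full punctured neighborhood of $0$ on which $E_\lambda$ is real-analytic, and no such motion is exhibited. Note also that in your setup there is no identified jump in the numerical value of $\gamma$: both $\gamma(E_0)$ and $\gamma(E_\lambda)$ are positive and bounded below by a quarter of the diameter (Property (P10)), so everything hinges on the contradiction argument, which fails. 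To salvage a proof you should abandon the extremal-function route and instead make the capacity itself jump, as the paper does: arrange for $\gamma$ to vanish along a sequence $\lambda_n\to 0$ (zero length of $E_{\lambda_n}$ via singularity of harmonic measure with respect to $\mathcal{H}^1$ on the fractal quasicircles) while $\gamma(E_0)>0$.
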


In the same paper, the authors proved that logarithmic capacity, on the other hand, does vary continuously under holomorphic motions. See also \cite{POU} for the corresponding result for condenser capacity.

The present work is motivated by the following question raised by Paul Gauthier:

\begin{question}
\label{mainquestion}
Does Theorem \ref{capmotionthm} remain true if analytic capacity is replaced by continuous analytic capacity?
\end{question}

In other words, the question is whether continuous analytic capacity vary continuously under holomorphic motions. The \textit{continuous analytic capacity} of a compact set $E \subset \mC$ is defined by
$$\alpha(E):= \sup \{|g'(\infty)|: g \in A(D), |g| \leq 1 \,\, \mbox{on} \,\, \mC\}.$$
Here as before $D=\hC \setminus E$, and $A(D)$ denotes the subspace of $H^\infty(D)$ consisting of continuous functions on $\hC$ that are analytic on $D$.

The notion of continuous analytic capacity was introduced by Erokhin and Vitushkin in the 1950's to study problems of uniform rational approximation of analytic functions on compact subsets of the plane. See \cite{VIT}. It follows directly from the definitions that $\alpha(E) \leq \gamma(E)$ for all $E$. In particular, we have that $\alpha(E)=0$ whenever $\mathcal{H}^1(E)=0$, in view of previous remarks. In fact, Cauchy's theorem can be used to show that $\alpha(E)=0$ if $\mathcal{H}^1(E)<\infty$, see e.g. \cite[Theorem 3.4]{YOU2}. More generally, this remains true if $E$ has $\sigma$-finite $\mathcal{H}^1$ measure. On the other hand, the aforementioned argument using Cauchy transforms and Frostman's Lemma applies to continuous analytic capacity as well, so that $\alpha(E)>0$ whenever $\operatorname{dim}_H(E)>1$. The case of dimension one is much less understood for continuous analytic capacity than for analytic capacity. As far as we know, there is no known geometric characterization of sets of zero continuous analytic capacity.

Unfortunately the construction of Theorem \ref{capmotionthm} does not directly provide an answer to Question \ref{mainquestion}. Indeed, the set $E$ and the holomorphic motion $h:\mD \times \hC \to \hC$ in Theorem \ref{capmotionthm} were constructed so that $\gamma(E)=\gamma(E_0)>0$ but $\gamma(E_{\lambda_n})=0$ for all $n$, for some sequence $(\lambda_n)$ of small positive numbers converging to $0$. In fact, a key part of the construction was to ensure that the set $E$ together with the sets $E_{\lambda_n}$ were all contained in the unit circle, so that their analytic capacity was easier to estimate. Unfortunately, this constraint on the sets $E$ and $E_{\lambda_n}$ shows that the construction in Theorem \ref{capmotionthm} cannot work for Question \ref{mainquestion}. Indeed, since the sets $E$ and $E_{\lambda_n}$ are all contained in the unit circle, we have $\alpha(E)=0=\alpha(E_{\lambda_n})$ for all $n$, and there is no longer a discontinuity at $0$. This remains a fundamental obstacle and new ideas are required.

In this paper, we instead use an approach based on harmonic measure and Dirichlet algebras to answer Question \ref{mainquestion} in the affirmative. This approach also provides a new proof of Theorem \ref{capmotionthm}.

\begin{theorem}
\label{mainthm2}
There exist compact sets $E$ and $F$ as well as a holomorphic motion $h:\mD \times \hC \to \hC$ for which both functions
$$\lambda \mapsto \gamma(E_\lambda), \quad \lambda \mapsto \alpha(F_\lambda) \qquad (\lambda \in \mD)$$
are discontinuous at $0$.

\end{theorem}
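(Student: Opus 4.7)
The plan is to exploit the dynamics of hyperbolic quadratic polynomials near the center of the main cardioid of the Mandelbrot set. Consider $f_c(z) = z^2 + c$ and fix a holomorphic parameterization $c(\lambda)$, $\lambda \in \mathbb{D}$, of a small disk inside the main cardioid with $c(0) = 0$. For each such $\lambda$, the Julia set $J_\lambda := J(f_{c(\lambda)})$ is a Jordan curve: $J_0$ is the unit circle $\mathbb{T}$, while for $\lambda \neq 0$, $J_\lambda$ is a quasicircle of Hausdorff dimension strictly greater than one (Ruelle--Bowen), hence purely unrectifiable by classical results of Zdunik. In particular, $J_\lambda$ satisfies the equivalent conditions of Theorem~\ref{mainthm} for all such $\lambda$. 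By Ma\~n\'e--Sad--Sullivan structural stability within this hyperbolic component, there is a holomorphic motion $h: \mathbb{D} \times \widehat{\mathbb{C}} \to \widehat{\mathbb{C}}$ with $h_\lambda(J_0) = J_\lambda$ for all $\lambda \in \mathbb{D}$.

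For the set $F$ witnessing the $\alpha$-discontinuity, I would take a non-degenerate closed subarc $F \subset J_0 = \mathbb{T}$, so that $F_\lambda = h_\lambda(F)$ is a subarc of $J_\lambda$. At $\lambda = 0$, $F_0$ is a smooth rectifiable arc, so $\alpha(F_0) = 0$. For $\lambda \neq 0$, $F_\lambda$ is a subarc of the purely unrectifiable quasicircle $J_\lambda$, so Theorem~\ref{mainthm}(4) yields $\alpha(F_\lambda) = \gamma(F_\lambda)$. Since $h_\lambda$ is $K(\lambda)$-quasiconformal with $K(\lambda) \leq (1+|\lambda|)/(1-|\lambda|)$ by the $\lambda$-lemma, Tolsa's theorem on the quasiconformal invariance of analytic capacity gives $\gamma(F_\lambda) \geq c\,\gamma(F) > 0$ uniformly on any compact subdisk of $\mathbb{D}$. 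Hence $\alpha(F_\lambda) \not\to \alpha(F_0) = 0$, establishing the $\alpha$-discontinuity.

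For the set $E$ witnessing the $\gamma$-discontinuity, the idea is to use the \emph{same} motion but choose a carefully designed Cantor-type subset $E \subset \mathbb{T} = J_0$ of positive one-dimensional Hausdorff measure. Then $\gamma(E_0) > 0$, since $E$ is a subset of a rectifiable curve with $\mathcal{H}^1(E) > 0$. For $\lambda \neq 0$, $E_\lambda \subset J_\lambda$ is purely unrectifiable. If one can arrange that $\mathcal{H}^1(E_{\lambda_n}) < \infty$ along some sequence $\lambda_n \to 0$, then Theorem~\ref{thmvit} forces $\gamma(E_{\lambda_n}) = 0$, producing the $\gamma$-discontinuity and yielding a new proof of Theorem~\ref{capmotionthm}.

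The main obstacle is controlling $\mathcal{H}^1(E_{\lambda_n})$: generically, a quasiconformal image of a positive-length subset of $\mathbb{T}$ acquires Hausdorff dimension strictly greater than one and thus infinite $\mathcal{H}^1$. To force $\mathcal{H}^1(E_{\lambda_n}) < \infty$, one must construct $E$ as a dynamically invariant Cantor set via a symbolic subshift, and select $\lambda_n$ so that the Bowen--Ruelle pressure equation pins the Hausdorff dimension of $E_{\lambda_n}$ at exactly one with bounded $\mathcal{H}^1$; this is analogous in spirit to the symbolic construction in \cite{RYA}, but now living on purely unrectifiable quasicircles instead of the unit circle. Once this thermodynamic input is in place, everything else---existence of the holomorphic motion, purely unrectifiable quasicircle structure of $J_\lambda$, applicability of Theorem~\ref{mainthm}---is standard hyperbolic theory.
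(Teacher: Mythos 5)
Your construction of $F$ and the $\alpha$-discontinuity is essentially sound and close to the paper's own argument: the paper simply takes $F=\partial\mathbb{D}$, so that $F_\lambda=\mathcal{J}_{\lambda/4}$ and $\alpha(F_\lambda)=\gamma(F_\lambda)=1$ exactly for $\lambda\neq 0$; your subarc variant also works, and more simply than via quasiconformal invariance, since $\gamma(F_\lambda)\geq\operatorname{diam}(F_\lambda)/4$ by (P10). One caveat: ``Hausdorff dimension $>1$, hence purely unrectifiable'' is not a valid implication for general sets. For these Julia sets pure unrectifiability should instead be derived from Fatou's theorem that $\mathcal{J}_c$ has no tangent points for $c\in\mathcal{M}_0\setminus\{0\}$ together with the chain $(3)\Rightarrow(6)\Rightarrow(7)$ of Theorem \ref{mainthm}, or from Zdunik's singularity theorem correctly invoked.

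The $\gamma$ part has a genuine gap. You need a compact $E\subset\partial\mathbb{D}$ with $\gamma(E)>0$ and $\gamma(E_{\lambda_n})=0$ along some $\lambda_n\to 0$, and your proposed mechanism---pinning $\dim_H(E_{\lambda_n})$ at exactly $1$ with bounded $\mathcal{H}^1$ via a Bowen--Ruelle pressure equation---is not carried out, and would not suffice even if it were: dimension exactly one does not yield finite (or $\sigma$-finite) length, which is what an application of Theorem \ref{thmvit} requires. The idea you are missing is already present in your own setup. For $\lambda\neq 0$ the harmonic measure $\omega_\lambda^*$ on the unbounded side of $\mathcal{J}_{\lambda/4}$ is mutually singular with $\mathcal{H}^1$ (condition (6) of Theorem \ref{mainthm}), and under the B\"ottcher map harmonic measure is the pushforward of normalized Lebesgue measure on $\partial\mathbb{D}$. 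Hence for each $n$ there is a Borel set $B_n\subset\mathcal{J}_{\lambda_n/4}$ with $\mathcal{H}^1(B_n)=0$ whose preimage $A_n=h_{\lambda_n}^{-1}(B_n)\subset\partial\mathbb{D}$ has full Lebesgue measure; taking $E$ to be any compact subset of $\bigcap_n A_n$ of positive measure gives $\mathcal{H}^1(E_{\lambda_n})=0$ outright, so $\gamma(E_{\lambda_n})=0$ by Painlev\'e's theorem (P11), while $\gamma(E)>0$ by (P16). No thermodynamic formalism or dimension-pinning is needed; the singularity of harmonic measure with respect to length does all the work.
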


We also show how to combine the sets $E$ and $F$ in Theorem \ref{mainthm2} in order to get a single compact set.

\begin{corollary}
\label{maincorollary}
There exist a compact set $K$ and a holomorphic motion $g:\mD \times \hC \to \hC$ for which both functions
$$\lambda \mapsto \gamma(K_\lambda), \quad \lambda \mapsto \alpha(K_\lambda) \qquad (\lambda \in \mD)$$
are discontinuous at $0$.
\end{corollary}

We now give the main ideas of the proof of Theorem \ref{mainthm2}, which is inspired by holomorphic dynamics. Denote by $\mathcal{M}_0$ the \textit{main cardioid} of the Mandelbrot set, that is, the set of all parameters $c \in \mC$ for which the polynomial $p_c(z):=z^2+c$ has an attracting fixed point. It is well-known that the quadratic Julia sets $\mathcal{J}_c$ with $c \in \mathcal{M}_0$ are quasicircle Jordan curves and that they move holomorphically. More specifically, there is a holomorphic motion $h:\mD \times \hC \to \hC$ such that for each $\lambda \in \mD$, the map $h_\lambda$ is the unique conformal map from $\hC \setminus \overline{\mD}$ onto $\Omega_\lambda^*$ normalized by $h_\lambda(z) = z + O(1/z)$ at $\infty$, where $\Omega_\lambda^*$ is the unbounded complementary component of the quasicircle Julia set $\mathcal{J}_{\lambda/4}$. Note that the factor $1/4$ is introduced since $\mathcal{M}_0$ does not contain the unit disk but does contain the disk centered at $0$ of radius $1/4$.

\begin{figure}[h]
\begin{center}
\scalebox{0.5}{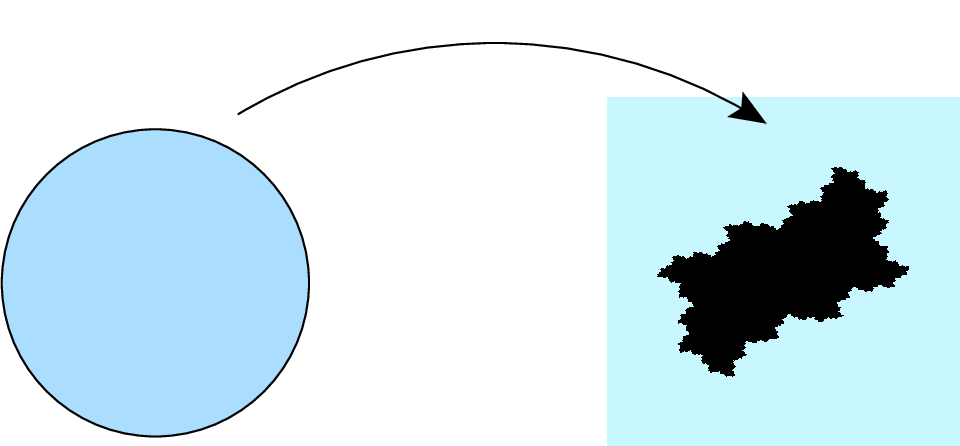}
\end{center}
\caption{The B\"{o}ttcher motion $h_\lambda(z)$.}
\end{figure}

Now, a classical theorem of Fatou states that for $\lambda \in \mD \setminus \{0\}$, the corresponding Julia set $\mathcal{J}_{\lambda/4}$ has no tangent point. As we shall see, this has two important consequences.

First, for such $\lambda$, the harmonic measure $\omega_\lambda^*$ on $\Omega_\lambda^*$ is mutually singular with $\mathcal{H}^1$ on $\mathcal{J}_{\lambda/4}$. In particular, if $(\lambda_n)$ is a fixed sequence of non-zero complex numbers in $\mD$ converging to $0$, then for each $n$ there is a Borel set $B_n \subset \mathcal{J}_{\lambda_n/4}$ with full harmonic measure but zero length. By definition of harmonic measure on Jordan curves, it follows that for each $n$, the preimage set $A_n:=h_{\lambda_n}^{-1}(B_n) \subset \partial \mD$ has full normalized Lebesgue measure in the unit circle. Letting $E$ be a compact subset of $\cap_n A_n$ of positive measure, we see that for all $n$, the set $E_{\lambda_n}=h_{\lambda_n}(E)$ has zero length, hence zero analytic capacity. On the other hand, it follows from well-known properties of analytic capacity that $\gamma(E_0)=\gamma(E)>0$, since $E$ is a subset of the unit circle with positive length. It follows that the function
$$\lambda \mapsto \gamma(E_\lambda) \qquad (\lambda \in \mD)$$
is discontinuous at $0$.

Secondly, the fact that $\mathcal{J}_{\lambda/4}$ has no tangent point for $\lambda \in \mathbb{D}\setminus \{0\}$ implies that for such $\lambda$, we have $\alpha(\mathcal{J}_{\lambda/4}) = \gamma(\mathcal{J}_{\lambda/4})$. But $\gamma(\mathcal{J}_{\lambda/4})=1$, which easily follows from the fact that $h_\lambda: \hC \setminus \overline{\mD} \to \Omega_\lambda^*$ is conformal with normalization $h_\lambda(z) = z + O(1/z)$ at $\infty$. Therefore, taking $F=\partial \mD$, we get that $\alpha(F_\lambda)=\alpha(\mathcal{J}_{\lambda/4})=1$ for all $\lambda \in \mathbb{D}\setminus \{0\}$. On the other hand, note that $\alpha(F_0)=\alpha(F)=0$, since $F=\partial \mathbb{D}$ has finite length. It follows that the function
$$\lambda \mapsto \alpha(F_\lambda) \qquad (\lambda \in \mathbb{D})$$
is discontinuous at $0$.

The proof of Theorem \ref{mainthm2} outlined above requires various properties of quasicircle Julia sets that might be of independent interest. For this reason we record them in the following proposition.

\begin{proposition}
\label{mainproposition2}
Let $c$ be a non-zero parameter in the main cardioid $\mathcal{M}_0$ of the Mandelbrot set, and let $\mathcal{J}_c$ be the corresponding Julia set. Denote by $\omega_c$ and $\omega_c^*$ the harmonic measures for the bounded and unbounded components of $\hC\setminus \mathcal{J}_c$ respectively. Then

\begin{itemize}
\item $\mathcal{J}_c$ has no tangent point,
\item $\omega_c \perp \omega_c^*$,
\item $\alpha(\mathcal{J}_c) = \gamma(\mathcal{J}_c)=1$,
\item $\omega_c^* \perp \mathcal{H}^1$ on $\mathcal{J}_c$.
\end{itemize}

\end{proposition}
As far as we know, Proposition \ref{mainproposition2} provides the first examples of fractal sets for which the precise value of the continuous analytic capacity is known (and positive).

We now present an application of our method to the study of extremal functions for continuous analytic capacity. First, it is well-known that for any compact set $E \subset \mC$, there exists an extremal function attaining the supremum in the definition of analytic capacity. More precisely, there exists a function $f$ analytic on $D=\hC \setminus E$ with $|f| \leq 1$ on $D$ and $f'(\infty)=\gamma(E)$. In fact, if $\gamma(E)>0$
 then $f$ is unique and is called the \textit{Ahlfors function} for $E$. Uniqueness was first proved by S. Ya. Khavinson in \cite{HAV2}, see also \cite{HAV} for an earlier announcement and \cite{FIS} for a simplified proof by Fisher. By contrast, we show that extremal functions for continuous analytic capacity may not exist.

\begin{corollary}
\label{extremal}
Let $J \subset \mathbb{C}$ be any Jordan arc such that $\mathcal{H}^1(T_J)=0$, where $T_J$ denotes the set of all tangent points of $J$. Then there is no continuous function $g:\hC \to \mC$ analytic on $\hC \setminus J$ such that $|g| \leq 1$ on $\mC$ and $|g'(\infty)|=\alpha(J)$
\end{corollary}

The rest of the article is structured as follows. Section \ref{sec2} contains various preliminaries and notation that will be needed throughout the paper. In Section \ref{sec3} we prove Theorem \ref{mainthm2}. Section \ref{sec4} contains the proof of Corollary \ref{extremal}. Lastly, in Section \ref{sec5} we prove Corollary \ref{maincorollary}.

\subsection*{Acknowledgments}
The author would like to thank the anonymous referee for several relevant historical remarks related to Corollary \ref{extremal}.

\section{Notation and preliminaries}
\label{sec2}

\subsection{Notation}
The following notation will be used throughout the paper. We denote the complex plane by $\mC$, the Riemann sphere (extended complex plane) by $\hC$ and the open unit disk by $\mD$. For $z_0 \in \mC$ and $r>0$, we denote by
$$B(z_0,r):=\{z \in \mC: |z-z_0|<r\}$$
and
$$\overline{B}(z_0,r):=\{z \in \mC: |z-z_0| \leq r\}$$
the open disk and closed disk respectively centered at $z_0$ of radius $r$. In particular $\mD=B(0,1)$.

Let $A$ be a subset of $\mC$. For $s \geq 0$ and $0<\delta \leq \infty$, we define
$$\mathcal{H}_{\delta}^{s}(A):= \inf \left\{ \sum_j \operatorname{diam}(A_j)^s : A \subset \bigcup_j A_j, A_j \subset \mC, \operatorname{diam}(A_j) \leq \delta \right\}.$$
Here $\operatorname{diam}(A_j)$ denotes the diameter of the set $A_j$:
$$\operatorname{diam}(A_j):=\sup_{z,w \in A_j} |z-w|.$$
The $s$-\textit{dimensional Hausdorff measure} of $A$ is
$$\mathcal{H}^s(A) := \sup_{\delta>0} \mathcal{H}_{\delta}^s(A) = \lim_{\delta \to 0} \mathcal{H}_{\delta}^s(A).$$
The \textit{Hausdorff dimension} of $A$ is the unique positive number $\operatorname{dim}_H(A)$ such that

\begin{displaymath}
\mathcal{H}^{s}(A) = \left\{ \begin{array}{ll}
\infty & \textrm{if $s < \dim_H(A) $}\\
0 & \textrm{if $s>\dim_H(A)$}.\\
\end{array} \right.
\end{displaymath}

We will also sometimes use \textit{length} to denote $1$-dimensional Hausdorff measure $\mathcal{H}^1$.

\subsection{Jordan curves and harmonic measure}

A curve $J \subset \mC$ parametrized by a continuous function $\eta:[0,1] \to \mC$ is called a \textit{Jordan arc} if $\eta$ is injective, in other words, if the curve $J$ is simple (non self-intersecting). We say that $J$ is \textit{rectifiable} if $\mathcal{H}^1(J)<\infty$.

If instead $\eta$ is injective when restricted to $(0,1]$ and if $\eta(0)=\eta(1)$, we call $J$ a \textit{Jordan curve}.

If $J$ is a Jordan arc and if $t_0 \in (0,1)$, we say that $J$ has a \textit{tangent} at $\eta(t_0)$ if there exists an angle $\theta$ such that

\begin{displaymath}
\arg{(\eta(t)-\eta(t_0))} \to \left\{ \begin{array}{ll}
\theta & \textrm{if $t \to t_0^+ $}\\
\theta+\pi & \textrm{if $t \to t_0^-$}.\\
\end{array} \right.
\end{displaymath}
This is independent of the choice of the parametric representation. The set of tangent points of $J$ will be denoted by $T_J$. Note that this definition remains valid if $J$ is a Jordan curve, and in this case we can also consider whether $\eta(0)=\eta(1)$ is a tangent point.

Let $J \subset \mC$ be a Jordan curve and denote by $\Omega$ and $\Omega^*$ the bounded and unbounded components of $\hC \setminus J$ respectively, so that $\infty \in \Omega^*$. Fix a point $z_0 \in \Omega$, and consider conformal maps $f:\mD \to \Omega$ and $g: \hC \setminus \overline{\mD} \to \Omega^*$ with $f(0)=z_0$ and $g(\infty)=\infty$. Then $f$ and $g$ extend to homeomorphisms on the closure of their respective domain by Carath\'eodory's theorem, and we can consider the pushforward measures $\omega:=f_*(\sigma), \omega^*:=g_*(\sigma)$, where $\sigma$ is the normalized Lebesgue measure on $\partial \mD$. This defines Borel probability measures on $J$ satisfying
$$\omega(E)=\sigma(f^{-1}(E))$$
and
$$\omega^*(E)=\sigma(g^{-1}(E))$$
for every Borel set $E \subset J$. The measures $\omega$ and $\omega^*$ are called \textit{harmonic measures} for $\Omega$ and $\Omega^*$ with respect to $z_0$ and $\infty$ respectively. If $J \subset \mC$ is a Jordan arc, then $\hC \setminus J$ has only one component, but since $J$ has two sides there are two measures $\omega$ and $\omega^*$ which give the harmonic measure of sets on each of the two sides of $J$.

Let $\mu_1$ and $\mu_2$ be two positive Borel measures on $\mC$. We say that $\mu_1$ is \textit{absolutely continuous} with respect to $\mu_2$, and write $\mu_1 \ll \mu_2$, if $\mu_1(E)=0$ whenever $E \subset \mC$ is a Borel set with $\mu_2(E)=0$. If $\mu_1 \ll \mu_2$ and $\mu_2 \ll \mu_1$, then we say that $\mu_1$ and $\mu_2$ are \textit{mutually absolutely continuous} and write $\mu_1 \ll \mu_2 \ll \mu_1$.

On the other hand, we say that $\mu_1$ and $\mu_2$ are \textit{mutually singular}, and write $\mu_1 \perp \mu_2$, if there exist two disjoint Borel sets $A,B \subset \mC$ such that $\mu_1$ is concentrated on $A$ and $\mu_2$ is concentrated on $B$, meaning that
$$\mu_1(E)=\mu_1(E \cap A)$$
and
$$\mu_2(E)=\mu_2(E \cap B)$$
for every Borel set $E \subset \mC$.

For example, it is well-known that harmonic measures on the same domain but for different points are always mutually absolutely continuous, see e.g. \cite[Theorem 4.3.6]{RAN2}.

\subsection{Properties of analytic capacity and continuous analytic capacity}

We now list various properties of analytic capacity and continuous analytic capacity. Recall from the introduction that for $E \subset \mC$ compact and $D:=\hC \setminus E$, the analytic capacity of $E$ is
$$\gamma(E)= \sup \{|f'(\infty)|: f \in H^\infty(D), |f| \leq 1 \,\, \mbox{on} \,\, D\}$$
and the continuous analytic capacity of $E$ is
$$\alpha(E)= \sup \{|g'(\infty)|: g \in A(D), |g| \leq 1 \,\, \mbox{on} \,\, \mC \},$$
where $H^\infty(D)$ is the space of all bounded analytic functions on $D$ and $A(D)$ is the subspace of $H^\infty(D)$ consisting of all continuous functions on $\hC$ that are analytic on $D$.

In the following, $E,F$ and $E_n$, $n \in \mathbb{N}$, all denote compact subsets of $\mC$. Also, we denote by $\Omega_E$ and $\Omega_F$ the unbounded components of $\hC \setminus E$ and $\hC \setminus F$ respectively.

\begin{enumerate}[\rm(P1)]
\item $\alpha(E) \leq \gamma(E)$.
\item For $c,d \in \mC$, $\gamma(cE+d)=|c|\gamma(E)$ and $\alpha(cE+d)=|c|\alpha(E)$.
\item If $E \subset F$, then $\gamma(E) \leq \gamma(F)$ and $\alpha(E) \leq \alpha(F)$.
\item If $E_1 \supset E_2 \supset E_3 \dots$, then $\gamma(\cap_n E_n) = \lim_{n \to \infty} \gamma(E_n)$.
\item $\gamma(E) = \gamma(\partial \Omega_E)$.
\item If $\gamma(E)>0$ and if $\Omega_E=\hC \setminus E$ (i.e. $\hC \setminus E$ is connected), then there is a unique function $f \in H^\infty(\Omega_E)$ with $|f| \leq 1$ on $\Omega_E$ and $f'(\infty)=\gamma(E)$, called the \textit{Ahlfors function} for $E$. If in addition $E$ is connected, then $f$ is the unique conformal map from $\Omega_E$ onto $\mD$ with $f(\infty)=0$ and $f'(\infty)>0$.

\item If $f:\Omega_E\to \Omega_F$ is conformal with $f(z)=az+b+O(1/z)$ at $\infty$, then $\gamma(F)=|a|\gamma(E)$.
\item For $a,b \in \mC$, $\gamma([a,b])=|a-b|/4$.
\item For $z_0 \in \mC$ and $r>0$, $\gamma(\overline{B}(z_0,r))=r$.
\item If $E$ is connected, then $\gamma(E) \geq \operatorname{diam}(E)/4$ and $\gamma(E)=c(E)$, where $c(E)$ denotes the logarithmic capacity of $E$.
\item $\gamma(E) \leq \mathcal{H}^1(E)$. In particular, if $\mathcal{H}^1(E)=0$, then $\gamma(E)=0$.
\item $\gamma(E)=0$ if and only if $E$ is removable for the class $H^\infty$, i.e., every bounded analytic function on $\hC \setminus E$ is constant.
\item $\alpha(E)=0$ if and only if $E$ is removable for the class $A$, i.e., every continuous function on $\hC$ analytic on $\hC \setminus E$ is constant.
\item If $\Omega_E=\hC \setminus E$ and if $E$ is bounded by finitely many pairwise disjoint analytic Jordan curves, then $\alpha(E)=\gamma(E)$. In particular, we have $\alpha(\overline{B}(z_0,r))=r$ for all $z_0 \in \mC, r>0$.
\item If $E$ is contained in the real line, then $\gamma(E)=\mathcal{H}^1(E)/4$.
\item If $\mathcal{H}^1(E)<\infty$, then $\gamma(E)=0$ if and only if $\mathcal{H}^1(E \cap \Gamma)=0$ for all rectifiable curves $\Gamma$. In particular, if $E$ is contained in a rectifiable curve, then $\gamma(E)=0$ if and only if $\mathcal{H}^1(E)=0$.
\item If $\mathcal{H}^1(E)<\infty$, then $\alpha(E)=0$. More generally, if $E$ has $\sigma$-finite length, then $\alpha(E)=0$.
\item If $\operatorname{dim}_H(E)>1$, then $\alpha(E)>0$ and so in particular $\gamma(E)>0$.
\item Suppose $E$ and $F$ are disjoint. If $\gamma(E)=0$, then $\gamma(E \cup F) = \gamma(F)$. If $\alpha(E)=0$, then $\alpha(E \cup F)=\alpha(F)$.
\item There is an absolute constant $C$ such that
$$\gamma(E \cup F) \leq C(\gamma(E)+\gamma(F))$$
and
$$\alpha(E \cup F) \leq C(\alpha(E)+\alpha(F)).$$
\item If $\phi: \mC \to \mC$ is bilipschitz, there is a constant $C$ depending only on $\phi$ such that
$$\frac{1}{C} \gamma(E) \leq \gamma(\phi(E)) \leq C \gamma(E)$$
and
$$\frac{1}{C} \alpha(E) \leq \alpha(\phi(E)) \leq C \alpha(E).$$
\end{enumerate}
Properties (P1) to (P13) are standard and the proofs can be found in various textbooks such as \cite{DUD}, \cite{GAR} and \cite{TOL}. Property (P14) is due to Ahlfors and Garnett, see \cite[Chapter I, Theorem 4.1]{GAR}. Property (P15) is due to Pommerenke, see \cite[Chapter I, Theorem 6.2]{GAR}. Property (P16) is a theorem due to David \cite{DAV}, formerly known as Vitushkin's conjecture. Property (P17) is essentially due to Besicovitch, see e.g. \cite[Theorem 3.4]{YOU2} and \cite[Corollary 3.5]{YOU2}. For Property (P18), see e.g. \cite[Theorem 2.1]{YOU2}. Property (P19) follows directly from \cite[Proposition 2.2]{YOU2} and \cite[Proposition 3.1]{YOU2}. Properties (P20) and (P21) are deep results due to Tolsa, see \cite{TOL2}, \cite{TOL3} and \cite{TOL4}. It is not known if we can take $C=1$ in Property (P20).

\subsection{Tangent points, harmonic measure and Dirichlet algebras}

The proof of Theorem \ref{mainthm2} relies on the existence of Jordan curves $J$ for which $\gamma(J)=\alpha(J)$. In general analytic capacity and continuous analytic capacity are equal provided $A(\hC \setminus J)$ is dense in $H^{\infty}(\hC \setminus J)$ in some sense, which is made precise using the notion of Dirichlet algebra.

\begin{definition}
For $E \subset \mC$ compact and $D=\hC \setminus E$, we say that $A(D)$ is a \textit{Dirichlet algebra} if for every continuous function $g:E \to \mR$ and every $\epsilon>0$, there exists a function $f \in A(D)$ such that
$$\|g-\operatorname{Re}(f)\|_E<\epsilon,$$
where $\|\cdot \|_E$ denotes the uniform norm on $E$.
\end{definition}

In 1963, Browder and Wermer characterized Dirichlet algebras on Jordan curves in terms of harmonic measure. For a Jordan curve $J \subset \mC$, as before we denote by $\omega$ and $\omega^*$ the harmonic measures on $\Omega$ and $\Omega^*$, the bounded and unbounded components of $\hC \setminus J$ respectively. For simplicity, let $A_J:=A(\hC \setminus J)$. Then $A_J$ is the space of all continuous functions on $\hC$ that are analytic on both $\Omega$ and $\Omega^*$.

\begin{theorem}[Browder--Wermer \cite{BW}]
\label{BrowderWermer}
The space $A_J$ is a Dirichlet algebra if and only if $\omega \perp \omega^*$.
\end{theorem}

Much later, in 1989, Bishop, Carleson, Garnett and Jones obtained a geometric characterization of the Jordan curves for which $\omega \perp \omega^*$.

\begin{theorem}[Bishop--Carleson--Garnett--Jones \cite{BCGJ}]
\label{BCGJ}
For a Jordan curve $J \subset \mC$ and $\omega,\omega^*$ as above, we have that $\omega \perp \omega^*$ if and only if $\mathcal{H}^1(T_J)=0$, where $T_J$ denotes the set of all tangent points of $J$.
\end{theorem}

The notion of Dirichlet algebra turns out to be closely related to other types of approximation.

\begin{definition}
Let $D:=\hC \setminus J$. We say that $A_J$ is \textit{pointwise boundedly dense} in $H^\infty(D)$ if there is a constant $C>0$ depending only on $J$ such that for every $f \in H^\infty(D)$, there is a sequence $(f_n) \subset A_J$ such that $\|f_n\|_D \leq C \|f\|_D$ and $f_n(z) \to f(z)$ as $n \to \infty$ for all $z \in D$.
\end{definition}

\begin{definition}
We say that $A_J$ is \textit{strongly pointwise boundedly dense} in $H^\infty(D)$ if $A_J$ is pointwise boundedly dense in $H^\infty(D)$ with $C=1$.
\end{definition}
It is quite remarkable that the three types of approximation are in fact equivalent.

\begin{theorem}
\label{thmDir}
For $J \subset \mC$ a Jordan curve and $D=\hC \setminus J$, the following are equivalent:
\begin{enumerate}[\rm(i)]
\item $A_J$ is a Dirichlet algebra.
\item $A_J$ is pointwise boundedly dense in $H^\infty(D)$.
\item $A_J$ is strongly pointwise boundedly dense in $H^\infty(D)$.
\end{enumerate}
\end{theorem}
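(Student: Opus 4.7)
The implication $(\mathrm{iii}) \Rightarrow (\mathrm{ii})$ is immediate from the definitions. My plan is to close the circle via $(\mathrm{ii}) \Rightarrow (\mathrm{i}) \Rightarrow (\mathrm{iii})$, which has the advantage that the construction for $(\mathrm{i}) \Rightarrow (\mathrm{iii})$ naturally produces the sharp constant $C = 1$, so that no separate Davie-type sharpening is required as a standalone input.

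For $(\mathrm{ii}) \Rightarrow (\mathrm{i})$, I would reduce to showing, via Hahn--Banach, that every real measure $\mu$ on $J$ annihilating $A_J$ vanishes. The Cauchy kernel $(w - z)^{-1}$ lies in $A_J$ for every $z \in D = \RiemannSphere \setminus J$, so the Cauchy transform $\hat{\mu}$ vanishes on $D$. The pointwise bounded density hypothesis lets me bootstrap this to all of $H^\infty(D)$: for $f \in H^\infty(D)$ pick $(f_n) \subset A_J$ with $\|f_n\|_D \leq C\|f\|_D$ and $f_n \to f$ pointwise on $D$, so $\int f_n \, d\mu = 0$ for every $n$. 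The F.~and M.~Riesz theorem for the uniform algebra $A_J$ (via Glicksberg, with $\omega$ and $\omega^*$ as representing measures for points in $\Omega$ and $\Omega^*$) then gives $\mu \ll \omega + \omega^*$, and weak-$*$ convergence of the boundary traces of $f_n$ in $L^\infty(\omega)$ and $L^\infty(\omega^*)$ permits passing to the limit. Reality of $\mu$, together with the freedom to choose $f$ realizing essentially arbitrary bounded boundary data on the two sides of $J$, then forces $\mu = 0$.

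For $(\mathrm{i}) \Rightarrow (\mathrm{iii})$, I would first pass to $\omega \perp \omega^*$ using Browder--Wermer (Theorem~\ref{BrowderWermer}), and then use the mutual singularity to glue boundary data. Given $f \in H^\infty(D)$ with $\|f\|_\infty \leq 1$ and boundary traces $f_1^{\ast} \in L^\infty(\omega)$, $f_2^{\ast} \in L^\infty(\omega^*)$ from the two sides of $J$, choose disjoint Borel sets $B, B^* \subset J$ carrying $\omega$ and $\omega^*$ respectively and form the single function $\Phi := f_1^{\ast} \mathbf{1}_B + f_2^{\ast} \mathbf{1}_{B^*} \in L^\infty(J)$, which satisfies $\|\Phi\|_\infty \leq 1$. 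Results from Bishop's thesis \cite{BIS} and Gamelin--Garnett \cite{GAG} show that $\Phi$ lies in the weak-$*$ closure of $\{g|_J : g \in A_J,\ \|g\|_\infty \leq 1\}$; Mazur's theorem then yields norm-bounded convex combinations $g_n \in A_J$ with $\|g_n\|_\infty \leq 1$ whose boundary traces converge weak-$*$ to $\Phi$, and whose Cauchy/Poisson extensions on $\Omega$ and $\Omega^*$ converge pointwise to $f$ on $D$. This delivers $(\mathrm{iii})$ with the sharp constant $C = 1$.

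The main obstacle I anticipate is the gluing step in $(\mathrm{i}) \Rightarrow (\mathrm{iii})$: the weak-$*$ approximants of $\Phi$ must be realized by $A_J$ functions that are continuous \emph{across} $J$, rather than merely by independent pairs in $A(\Omega) \oplus A(\Omega^*)$ with matching traces. The Dirichlet hypothesis, through $\omega \perp \omega^*$, is precisely the structural property that permits this simultaneous continuous extension; without this compatibility between the two harmonic measures, there is no mechanism for matching the two independent boundary traces by a single continuous function on $J$, and the construction breaks down.
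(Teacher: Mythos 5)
The paper does not actually prove Theorem \ref{thmDir}: it assembles it from three substantial results in the literature (Hoffman for (i)$\Rightarrow$(ii), Davie for (ii)$\Rightarrow$(iii), Gamelin--Garnett for (iii)$\Rightarrow$(i)), so a self-contained argument must in effect reprove those. Your sketch of (ii)$\Rightarrow$(i) begins with a concrete error: for $z\in D=\RiemannSphere\setminus J$ the Cauchy kernel $w\mapsto (w-z)^{-1}$ has a pole at $w=z\in D$, so it is \emph{not} analytic on $D$ and does not belong to $A_J$. (For $J=\partial\mathbb{D}$ the algebra $A_J$ consists only of the constants, since $\alpha(\partial\mathbb{D})=0$, so it certainly contains no Cauchy kernels; you have conflated $A_J=A(\RiemannSphere\setminus J)$ with $R(J)$ or with $A(\Omega)$.) The error is not harmless: if $\mathcal{C}\mu$ did vanish on $D$ for every annihilating measure $\mu$, then for any Jordan curve of zero area we would have $\mathcal{C}\mu=0$ a.e., hence $\mu=-\tfrac{1}{\pi}\overline{\partial}(\mathcal{C}\mu)=0$, and \emph{every} such curve would be Dirichlet --- false already for the circle, where $\omega$ and $\omega^*$ are mutually absolutely continuous. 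So the very first reduction in that implication fails.

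The (i)$\Rightarrow$(iii) half has a different problem: the assertion that the glued boundary function $\Phi=f_1^{\ast}\mathbf{1}_B+f_2^{\ast}\mathbf{1}_{B^*}$ lies in the weak-$*$ closure of the \emph{unit ball} of $A_J|_J$ is precisely the combined content of Hoffman's and Davie's theorems; deferring it to ``results from \cite{BIS} and \cite{GAG}'' is circular in a proof of this equivalence. In particular, the sharp constant $C=1$ \emph{is} Davie's theorem \cite{DAVI}; it is not delivered for free by Mazur's theorem, which concerns weak versus norm closures of convex sets, whereas the nontrivial point here is that one can approximate in the weak-$*$ sense without any degradation of the sup-norm bound. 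The surrounding architecture --- reduce (i) to annihilating measures via Hahn--Banach, use $\omega\perp\omega^*$ from Theorem \ref{BrowderWermer} to glue the two boundary traces, and recover pointwise convergence on $D$ by pairing weak-$*$ convergent boundary data with Poisson kernels --- is the right skeleton, but as written one of the two load-bearing steps is false and the other is assumed.
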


The implication (i) $\Rightarrow$ (ii) is due to Hoffman \cite{WER} (see also \cite[Theorem 9.1]{GAG}), the implication (ii) $\Rightarrow$ (iii) is due to Davie \cite[Theorem 1.3]{DAVI} (see also \cite[Theorem 6.8]{GAG}) and the implication (iii) $\Rightarrow$ (i) is due to Gamelin and Garnett \cite[Theorem 9.1]{GAG}. See also \cite{BIS} and \cite{BIS2} for alternate proofs.

\subsection{B\"{o}tcher holomorphic motion}

As mentioned in the introduction, the holomorphic motion $h:\mD \times \hC \to \hC$ in Theorem \ref{mainthm2} stems from the theory of holomorphic dynamics. We need some preliminaries.

For a quadratic polynomial $p_c(z):=z^2+c$, we define the \textit{filled Julia set} $\mathcal{K}_c$ of $p_c$ by
$$\mathcal{K}_c:=\{z \in \mC : p_c^m(z) \nrightarrow \infty \,\, \mbox{as}\,\, m \to \infty\},$$
where $p_c^m$ denotes the $m$-th iterate of the polynomial $p_c$. The \textit{Julia set} $\mathcal{J}_c$ of $p_c$ is defined as the boundary of the filled Julia set:
$$\mathcal{J}_c:=\partial \mathcal{K}_c,$$
and the \textit{Mandelbrot set} $\mathcal{M}$ is defined as the set of all parameters $c \in \mC$ for which the orbit of $0$ under $p_c$ remains bounded:
$$\mathcal{M}:= \{c \in \mC : p_c^m(0) \nrightarrow \infty \,\, \mbox{as}\,\, m \to \infty\}.$$
Note that a parameter $c$ belongs to $\mathcal{M}$ if and only if the corresponding Julia set $\mathcal{J}_c$ is connected. For $c \notin \mathcal{M}$, the Julia set $\mathcal{J}_c$ is a Cantor set. The Mandelbrot set contains a main cardioid $\mathcal{M}_0$ defined as the set of all parameters $c$ for which the polynomial $p_c$ has an attracting fixed point. It is easy to see that $\mathcal{M}_0$ contains $B(0,1/4)$.
For background on Julia sets and the Mandelbrot set, the reader may consult \cite{MCM} and \cite{MIL}.

The main cardioid $\mathcal{M}_0$ is especially interesting for Theorem \ref{mainthm2} since it is well-known that quadratic Julia sets with parameters belonging to $\mathcal{M}_0$ move holomorphically. More precisely, for each $c \in \mathcal{M}_0$, there is a unique conformal map $B_c:\hC \setminus \overline{\mD} \to \hC\setminus \mathcal{K}_c$ with normalization $B_c(z) = z + O(1/z)$ at $\infty$, called the \textit{B\"{o}ttcher map}. The map $B(c,z):=B_c(z)$ is holomorphic in both variables. Note that $B_0$ is the identity since $\mathcal{J}_0$ is the unit circle.

In order to work on the unit disk rather than $\mathcal{M}_0$, we make the change of variable $c=\lambda/4$. For $\lambda\in{\mathbb D}$, we denote by $\Omega_\lambda$ and $\Omega_\lambda^*$ the bounded and unbounded components of $\hC \setminus \mathcal{J}_{\lambda/4}$ respectively. Then the map $h: \mD \times (\hC \setminus \overline{\mD}) \to \hC$ defined by
$$h(\lambda,z):=B_{\lambda/4}(z) \qquad (\lambda \in \mD, z \in \hC \setminus \overline{\mD}) $$
 gives a holomorphic motion of $\hC \setminus \overline{\mD}$. By a famous theorem of S\l odkowski (see \cite{SLO} or \cite[Theorem 12.3.2]{AIM}), the holomorphic motion $h$ extends to a holomorphic motion of the whole sphere, which we denote by the same letter and refer to as the \textit{B\"{o}ttcher motion}. For each $\lambda \in \mD$, the map $h_\lambda:\hC \to \hC$ is quasiconformal. Moreover, it maps $\hC \setminus \overline{\mD}$ conformally onto $\Omega_{\lambda}^*$, and the unit circle homeomorphically onto $\mathcal{J}_{\lambda/4}$. See Figure 1 from the introduction.
In particular, this shows that the Julia sets $\mathcal{J}_{\lambda/4}$ for $\lambda \in \mD$ are all quasicircles. We will also need the following classical theorem of Fatou.

\begin{theorem}[Fatou]
\label{lemmaFatou}
For $c \in \mathcal{M}_0 \setminus \{0\}$, the corresponding Julia set $\mathcal{J}_c$ has no tangent point.
\end{theorem}
See \cite[Chapter 5, Section 3, Theorem 1]{STE} for a proof.

\section{Proof of Theorem \ref{mainthm2}}
\label{sec3}

We now have everything needed in order to prove Theorem \ref{mainthm2}. As mentioned in the introduction, the proof relies on the properties of the Julia sets in Proposition \ref{mainproposition2}, which we prove first.

\begin{proof}
Let $c$ be a non-zero parameter in the main cardioid $\mathcal{M}_0$ of the Mandelbrot set, and let $\mathcal{J}_c$ be the corresponding Julia set. Denote by $\omega_c$ and $\omega_c^*$ the harmonic measures for the bounded and unbounded components of $\hC\setminus \mathcal{J}_c$ respectively.

The fact that $\mathcal{J}_c$ has no tangent point is precisely Fatou's Theorem \ref{lemmaFatou}. The fact that $\omega_c \perp \omega_c^*$ then follows directly from the Bishop--Carleson--Garnett--Jones Theorem \ref{BCGJ}.

We now prove that $\alpha(\mathcal{J}_c) = \gamma(\mathcal{J}_c)=1$. In view of Property (P1) from Section \ref{sec2}, it suffices to show that $\alpha(\mathcal{J}_c) \geq \gamma(\mathcal{J}_c)$. To see this, first note that if $D_c:=\hC \setminus \mathcal{J}_c$, then $A(D_c)$ is a Dirichlet algebra, by the Browder--Wermer Theorem \ref{BrowderWermer}. In particular, by Theorem \ref{thmDir}, the space $A(D_c)$ is strongly pointwise boundedly dense in $H^\infty(D_c)$. Now, let $\epsilon>0$, and let $f \in H^\infty(D_c)$ with $|f| \leq 1$ on $D_c$ and $|f'(\infty)| \geq \gamma(\mathcal{J}_c) - \epsilon/2$. There is a sequence $(f_n) \subset A(D_c)$ with $|f_n| \leq 1$ on $D_c$ and $f_n \to f$ pointwise on $D_c$. In fact $f_n \to f$ locally uniformly on $D_c$, by Vitali's convergence theorem for analytic functions. In particular, for $n$ large enough, we have $|f_n'(\infty)| \geq |f'(\infty)| - \epsilon/2$, and thus
$$\alpha(\mathcal{J}_c) \geq |f_n'(\infty)| \geq |f'(\infty)| - \frac{\epsilon}{2} \geq \gamma(\mathcal{J}_c) - \epsilon.$$
Since $\epsilon>0$ was arbitrary, we obtain $\alpha(\mathcal{J}_c) \geq \gamma(\mathcal{J}_c)$, as required.

It remains to show that $\omega_c^* \perp \mathcal{H}^1$ on $\mathcal{J}_c$. To see this, first note that by the Makarov compression theorem (see \cite[Section 6.6]{POM}), there is a partition
$$\mathcal{J}_c=S_c \cup A_c \cup B_c$$
where $\mathcal{H}^1(A_c)=0$ and $\omega_c^*(B_c)=0$. It is well-known that for quasicircles, the set $S_c$ coincides with the set of tangent points up to a set of zero length, see e.g. \cite[Proposition 6.28]{POM}. But $\mathcal{J}_c$ has no tangent point, thus $\mathcal{H}^1(S_c)=0$ and the measure $\mathcal{H}^1$ restricted to $\mathcal{J}_c$ is concentrated on $B_c$, a set of zero harmonic measure. In other words  $\omega_c^* \perp \mathcal{H}^1$ as required. This completes the proof of Proposition \ref{mainproposition2}.
\end{proof}

\begin{remark}
The fact that $\omega_c^* \perp \mathcal{H}^1$ on $\mathcal{J}_c$ also follows from a deep theorem of Zdunik, as mentioned to us by Saeed Zakeri. In \cite[Theorem 1]{ZDU}, Zdunik proved that if $f:\hC \to \hC$ is a rational map of degree at least $2$, then the measure of maximal entropy $\mu$ on $\mathcal{J}_f$ is mutually singular with $\mathcal{H}^\alpha$, where $\alpha$ is the dimension of $\mu$, except for the case when $f$ is critically finite with parabolic orbifold. For quadratic polynomials $z \mapsto z^2+c$ with $c \in \mathcal{M}_0$, the measure of maximal entropy $\mu$ coincides with harmonic measure at $\infty$, and $\alpha=1$ by a famous theorem of Makarov. Note that the case $c=0$ is the only quadratic polynomial with parabolic orbifold.
\end{remark}

We can now proceed with the proof of Theorem \ref{mainthm2}.

\begin{proof}
We have to construct two compact sets $E$ and $F$ as well as a holomorphic motion $h:\mD \times \hC \to \hC$ for which both functions
$$\lambda \mapsto \gamma(E_\lambda), \quad \lambda \mapsto \alpha(F_\lambda) \qquad (\lambda \in \mD)$$
are discontinuous at $0$.

Let $h:\mD \times \hC \to \hC$ be the B\"{o}tcher holomorphic motion. Recall that for each $\lambda \in \mD$, the map $h_\lambda: \hC \to \hC$ is quasiconformal. Moreover, it maps the unit circle homeomorphically onto the quasicircle $\mathcal{J}_{\lambda/4}$, and $\hC \setminus \overline{\mD}$ conformally onto $\Omega_{\lambda}^*$, the unbounded complementary component of $\mathcal{J}_{\lambda/4}$. In addition, each $h_\lambda$ has normalization $h_\lambda(z)=z+O(1/z)$ at $\infty$.

Now, by Proposition \ref{mainproposition2}, we have that for each $\lambda \in \mD \setminus \{0\}$, the harmonic measure $\omega_\lambda^*$ on $\Omega_\lambda^*$ is mutually singular with $\mathcal{H}^1$ on $\mathcal{J}_{\lambda/4}$. Let $(\lambda_n)$ be any sequence of non-zero complex numbers in $\mD$ converging to $0$. Then for each $n$, there is a Borel set $B_n \subset \mathcal{J}_{\lambda_n/4}$ with $\omega_{\lambda_n}^*(B_n)=1$ but $\mathcal{H}^1(B_n)=0$. By the definition of harmonic measure in Section \ref{sec2}, it follows that for each $n$, the preimage set $A_n:=h_{\lambda_n}^{-1}(B_n) \subset \partial \mD$ has full normalized Lebesgue measure in the unit circle. In particular, the intersection $\cap_n A_n$ also has full Lebesgue measure. Let $E$ be any compact subset of $\cap_n A_n$ with positive Lebesgue measure. Then for each $n$ we have
$$\mathcal{H}^1(E_{\lambda_n}) = \mathcal{H}^1(h_{\lambda_n}(E)) \leq \mathcal{H}^1(h_{\lambda_n}(A_n)) = \mathcal{H}^1(B_n)=0.$$
In particular, by Property (P11) in Section \ref{sec2}, we get that $\gamma(E_{\lambda_n})=0$ for all $n$. On the other hand, it follows from Property (P16) in Section \ref{sec2} that $\gamma(E_0) = \gamma(E)>0$, since $E \subset \partial \mD$ has positive Lebesgue measure. This shows that the function
$$\lambda \mapsto \gamma(E_\lambda) \qquad (\lambda \in \mD)$$
is discontinuous at $0$.

For continuous analytic capacity, note that $\alpha(\mathcal{J}_{\lambda/4}) = \gamma(\mathcal{J}_{\lambda/4})$ for all $\lambda \in \mD \setminus \{0\}$, again by Proposition \ref{mainproposition2}. But $\gamma(\mathcal{J}_{\lambda/4})= \gamma(\overline{\mD})=1$, by Property (P7) and Property (P9) in Section \ref{sec2}. Letting $F:=\partial \mD$, we obtain
$$\alpha(F_\lambda)=\alpha(\mathcal{J}_{\lambda/4}) = \gamma(\mathcal{J}_{\lambda/4}) =1,$$
for all $\lambda \in \mD\setminus \{0\}$. On the other hand, it follows from Property (P17) in Section \ref{sec2} that $\alpha(F_0)=\alpha(F)=0$, since $F$ has finite length. Thus the function
$$\lambda \mapsto \alpha(F_\lambda) \qquad (\lambda \in \mD)$$
is discontinuous at $0$, as required.
\end{proof}

\section{Proof of Corollary \ref{extremal}}
\label{sec4}

In this section, we prove Corollary \ref{extremal}.

\begin{proof}
Let $J \subset \mC$ be a Jordan arc such that $\mathcal{H}^1(T_J)=0$, where $T_J$ denotes the set of all tangent points of $J$. We have to show that there is no continuous function $g:\hC \to \mC$ analytic on $\hC \setminus J$ such that $|g| \leq 1$ on $\mC$ and $|g'(\infty)|=\alpha(J)$.

Suppose for a contradiction that an extremal function $g$ exists. Multiplying by a constant of modulus $1$ if necessary, we may assume that $g'(\infty)=\alpha(J)$. Now, as in the proof of Proposition \ref{mainproposition2}, the fact that $\mathcal{H}^1(T_J)=0$ implies that $\alpha(J)=\gamma(J)$. Here we are using the fact that the Browder--Wermer Theorem \ref{BrowderWermer} and the Bishop--Carleson--Garnett--Jones Theorem \ref{BCGJ} are also valid for Jordan arcs, not just for Jordan curves. It follows that $g$ is also extremal for $\gamma(J)$. By Property (P6) from Section \ref{sec2}, we get that $g:\hC \setminus J \to \mD$ is conformal, with $g(\infty)=0$ and $g'(\infty)>0$. But since $J$ is a Jordan arc, every conformal map from $\hC \setminus J$ onto $\mD$ must be discontinuous at each point of $J$ which is not an endpoint, by the classical theory of boundary behavior of conformal maps. This contradicts the fact that $g$ is continuous everywhere. It follows that there is no extremal function for $\alpha(J)$, as required.
\end{proof}

\section{Proof of Corollary \ref{maincorollary}}
\label{sec5}

In this section, we prove Corollary \ref{maincorollary}. For the proof, we need the following result due to Pommerenke.
\begin{theorem}[Pommerenke (1960)]
\label{thmpom}
Let $F_1, \dots, F_n$ be compact subsets of $\mC$. Then for every $\epsilon>0$, there exists $\delta>0$ with the following property:

If $E_k:=F_k + d_k$ for some $d_k \in \mC$, $k=1,\dots,n$, and if all distances between $E_i$ and $E_j$ $(i \neq j)$ are greater than $\delta$, then
$$\left| \gamma\left( \bigcup_{k=1}^n E_k \right) - \sum_{k=1}^n \gamma(E_k) \right|< \epsilon.$$

\end{theorem}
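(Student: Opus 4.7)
The plan is to prove asymptotic additivity: namely, to show
\[
(1-\eta(\delta))\sum_{k=1}^n \gamma(E_k) \;\leq\; \gamma\Bigl(\bigcup_{k=1}^n E_k\Bigr) \;\leq\; (1+\eta(\delta))\sum_{k=1}^n \gamma(E_k),
\]
where $\eta(\delta) = O(R/\delta) \to 0$ as $\delta \to \infty$ and $R$ depends only on $F_1,\ldots,F_n$. Since $\gamma(E_k) = \gamma(F_k)$ by translation invariance (Property (P2)), the sum $\sum_k \gamma(F_k)$ is a fixed quantity, so once these bounds are in hand, $\delta$ may be chosen large enough to push each one-sided error below $\epsilon/2$. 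Fix $R$ so that each $F_k$ is contained in a closed disk of radius $R$ centered at some $w_k \in F_k$, and set $z_k := w_k + d_k \in E_k$ and $D_k := \overline{B}(z_k, R) \supset E_k$. Because $z_k \in E_k$ and $z_j \in E_j$ for $j \neq k$, we have $|z_k - z_j| \geq \operatorname{dist}(E_k, E_j) > \delta$, so the disks $D_k$ are pairwise disjoint as soon as $\delta > 2R$, and $|z - z_j| \geq \delta - R$ for every $z \in D_k$ and $j \neq k$.

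For the \emph{lower bound}, let $\phi_k$ be the Ahlfors function of $E_k$, so $|\phi_k|\leq 1$, $\phi_k(\infty)=0$, and $\phi_k'(\infty)=\gamma(E_k)$. Applying the Schwarz lemma to $w \mapsto \phi_k(z_k + R/w)$ on $\mathbb{D}$ yields $|\phi_k(z)| \leq R/|z-z_k|$ for $|z-z_k|>R$. Form the superposition $F := c \sum_{k=1}^n \phi_k$ with $c \in (0,1)$ to be chosen; $F$ is analytic on $\mathbb{C}\setminus\bigcup_k E_k$ and vanishes at infinity. A short case analysis, letting $k_0$ minimize $|z-z_{k_0}|$ and bounding $|\phi_{k_0}(z)| \leq 1$ trivially while $|\phi_j(z)| \leq R/(\delta - R)$ for $j \neq k_0$, yields $|F(z)| \leq c(1 + (n-1)R/(\delta - R))$ throughout $\mathbb{C}\setminus\bigcup_k E_k$. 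Choosing $c = (1 + (n-1)R/(\delta-R))^{-1}$ forces $|F| \leq 1$, so $\gamma(\bigcup_k E_k) \geq F'(\infty) = c\sum_k \gamma(E_k)$.

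For the \emph{upper bound}, let $f$ be the Ahlfors function for $E := \bigcup_k E_k$. Since $f$ is analytic in a neighborhood of each $\partial D_k$, define
\[
f_k(z) := -\frac{1}{2\pi i}\oint_{\partial D_k}\frac{f(\zeta)}{\zeta - z}\,d\zeta
\]
(with positive orientation) for $z$ off $\partial D_k$. The residue theorem together with $f(\infty) = 0$ gives $f = \sum_k f_k$ on $\mathbb{C} \setminus \bigcup_k \overline{D_k}$; each $f_k$ then extends analytically to $\mathbb{C} \setminus E_k$ via the identity $f_k = f - \sum_{j \neq k} f_j$, using that $f_j$ for $j \neq k$ is analytic across $\partial D_k$ (since $E_j \cap D_k = \emptyset$). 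The Cauchy bound gives $|f_k(z)| \leq R/(|z-z_k| - R)$ for $z \notin \overline{D_k}$, which is only useful when $|z-z_k| \gtrsim 2R$. For $z$ in the annulus $R < |z-z_k| < 2R$ or in $D_k \setminus E_k$, the decomposition $f_k = f - \sum_{j \neq k} f_j$ combined with $|f(z)| \leq 1$ and the Cauchy bound $|f_j(z)| \leq R/(\delta - 2R)$ for $j \neq k$ (valid since $|z - z_j| \geq \delta - 2R$) yields $|f_k(z)| \leq 1 + (n-1)R/(\delta - 2R)$ on $\mathbb{C} \setminus E$. The maximum principle, applied to $f_k$ on $\mathbb{C} \setminus E_k$ with $f_k(\infty) = 0$, propagates this bound to all of $\mathbb{C} \setminus E_k$. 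Therefore $|f_k'(\infty)| \leq (1 + (n-1)R/(\delta - 2R))\gamma(E_k)$, and summing gives
\[
\gamma(E) = f'(\infty) = \sum_k f_k'(\infty) \leq \bigl(1 + (n-1)R/(\delta-2R)\bigr)\sum_k \gamma(E_k).
\]

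The principal difficulty lies in the upper bound, specifically in controlling each piece $f_k$ in the annulus $R < |z - z_k| < 2R$ where the naive Cauchy estimate degenerates. The fix is the decomposition $f_k = f - \sum_{j\neq k} f_j$, which furnishes the required bound on the open set $\mathbb{C} \setminus E$, combined with the maximum principle to propagate it to $\mathbb{C} \setminus E_k$. With both bounds in hand, taking $\delta$ so large that $(n-1)R/(\delta - 2R)\cdot\sum_k \gamma(F_k) < \epsilon/2$ completes the argument; this choice depends only on $\epsilon$ and the fixed data $F_1,\ldots,F_n$.
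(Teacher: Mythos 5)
Your argument is correct in structure and is essentially the classical proof of this result; the paper itself does not prove the theorem but only cites Pommerenke \cite{POM0} and K\"uhnau \cite{KU}, and your Cauchy-integral decomposition of the Ahlfors function over separating circles, combined with a Schwarz-lemma superposition for the lower bound, is exactly the standard route. Two small points should be tightened, neither of which is a genuine gap. First, your quantitative constants are slightly optimistic in two places: in the lower bound, for a point $z$ outside all the disks $D_k$ the minimality of $|z-z_{k_0}|$ only yields $|z-z_j|\geq \delta/2$ for $j\neq k_0$ (not $\delta-R$), so the correct bound is $|\phi_j(z)|\leq 2R/\delta$; and in the upper bound, $|z-z_j|\geq\delta-2R$ feeds into the Cauchy estimate as $|f_j(z)|\leq R/(\delta-3R)$, not $R/(\delta-2R)$. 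Both corrections are still $O(R/\delta)$, so the conclusion survives with marginally worse constants. Second, since you only assume $F_k\subset\overline{B}(w_k,R)$, the set $E_k$ may meet $\partial D_k=\partial B(z_k,R)$, in which case $f$ need not be analytic on the contour of integration; replace $D_k$ by $\overline{B}(z_k,2R)$ (or $R$ by $R+1$) throughout, adjusting constants accordingly. With those repairs, and noting that the extremal function exists and can be normalized to vanish at $\infty$ even when the complement is disconnected (take $\phi_k\equiv 0$ if $\gamma(E_k)=0$), the proof is complete.
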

See \cite{POM0} or \cite[Chapter 6, Section 5]{KU}.

We can now proceed with the proof of Corollary \ref{maincorollary}.

\begin{proof}
Let $E,F$ and $h:\mD \times \hC \to \hC$ be as in Theorem \ref{mainthm2}, so that $h$ is a holomorphic motion of $\hC$ and both functions
$$\lambda \mapsto \gamma(E_\lambda), \quad \lambda \mapsto \alpha(F_\lambda) \qquad (\lambda \in \mD)$$
are discontinuous at $0$, where $E_\lambda=h_\lambda(E)$ and $F_\lambda = h_\lambda(F)$. By construction, the set $E$ is a compact subset of $F=\partial \mD$ with $\gamma(E)>0$, and there is a sequence $(\lambda_n)$ in $\mD \setminus \{0\}$ converging to $0$ such that
$$\gamma(E_{\lambda_n})=0 \qquad (n \in \mathbb{N})$$
and
$$\alpha(F_{\lambda_n})=1=\gamma(F_{\lambda_n}) \qquad (n \in \mathbb{N}).$$

Now, take $d>0$ large enough so that for each $\lambda \in \mD$, the translate $E_\lambda + d$ is disjoint from $F_\lambda$. We shall fix the value of $d$ later. Let $K:=(E+d) \cup F$, and define $g:\mD \times K \to \hC$ by
\begin{displaymath}
g(\lambda,z) = \left\{ \begin{array}{ll}
h(\lambda,z-d)+d & \textrm{if $z \in E+d$}\\
h(\lambda,z) & \textrm{if $z \in F$}.\\
\end{array} \right.
\end{displaymath}
It is easy to see that $g$ is a holomorphic motion of $K$. By S\l odkowski's theorem, we can extend $g$ to a holomorphic motion of $\hC$, which we still denote by the same letter. Note that for $\lambda \in \mD$, we have
$$K_\lambda = g_\lambda((E+d) \cup F) = g_\lambda(E+d) \cup g_\lambda(F) = (E_\lambda + d) \cup F_\lambda.$$

We first show that the function
$$\lambda \mapsto \alpha(K_\lambda) \qquad (\lambda \in \mD)$$
is discontinuous at $0$. For this, note that by Property (P1) and Property (P2) from Section \ref{sec2}, we have
$$\alpha(E_{\lambda_n}+d)=\alpha(E_{\lambda_n}) \leq \gamma(E_{\lambda_n})=0 \qquad (n \in \mathbb{N}).$$
It then follows from Property (P19) from Section \ref{sec2} that
$$\alpha(K_{\lambda_n}) = \alpha((E_{\lambda_n}+d) \cup F_{\lambda_n})= \alpha(F_{\lambda_n})=1 \qquad (n \in \mathbb{N}).$$
On the other hand, we have that $\mathcal{H}^1(K)=\mathcal{H}^1((E+d) \cup F)<\infty$, hence $\alpha(K)=0$ by Property (P17) from Section \ref{sec2}. This shows that the function
$$\lambda \mapsto \alpha(K_\lambda) \qquad (\lambda \in \mD)$$
is discontinuous at $0$.

It remains to prove that
$$\lambda \mapsto \gamma(K_\lambda) \qquad (\lambda \in \mD)$$
is discontinuous at $0$. For this, first observe that similarly as above, we have $\gamma(K_{\lambda_n})=1$ for all $n \in \mathbb{N}$. Moreover, note that all the previously obtained estimates for $\gamma(K_{\lambda_n})$, $\alpha(K_{\lambda_n})$ and $\alpha(K)$ are independent of $d$ as long as it is large enough. We now choose the value of $d$. By Theorem \ref{thmpom} with $n=2$, $\epsilon:=\gamma(E)>0$, $F_1:=E$, $F_2:=F$, $d_1:=d$, $d_2:=0$, we can take $d>0$ large enough so that
$$| \gamma((E+d) \cup F) - \gamma(E+d)-\gamma(F) |< \gamma(E).$$
Then
\begin{eqnarray*}
\gamma((E+d) \cup F) &>& \gamma(E+d)+\gamma(F)-\gamma(E)\\
&=& \gamma(E)+\gamma(F)-\gamma(E)\\
&=& \gamma(F)=1,
\end{eqnarray*}
where we used Property (P2), Property (P5) and Property (P9) from Section \ref{sec2}. Summarizing, we have $\gamma(K_{\lambda_n})=1$ for all $n \in \mathbb{N}$ but $\gamma(K)=\gamma((E+d) \cup F)>1$. It follows that the function
$$\lambda \mapsto \gamma(K_\lambda) \qquad (\lambda \in \mD)$$
is discontinuous at $0$, as required.
\end{proof}

\bibliographystyle{amsplain}

\end{document}